\newtheorem{theorem}{Theorem}
\newtheorem{proposition}[theorem]{Proposition}
\newtheorem{corollary}[theorem]{Corollary}
\newtheorem{lemma}[theorem]{Lemma}
\theoremstyle{definition}
\newtheorem{remark}[theorem]{Remark}
\newtheorem{remarks}[theorem]{Remarks}
\numberwithin{equation}{section}
\numberwithin{theorem}{section}
\newcommand\numberthis{\addtocounter{equation}{1}\tag{\theequation}}
\begin{document}
\bibliographystyle{plain} \title[Central limit theorem for periodic orbits]% \lebn]
{A central limit theorem for periodic orbits of hyperbolic flows} 

\author{Stephen Cantrell}
\address{Mathematics Institute, University of Warwick,
Coventry CV4 7AL, U.K.}
\email{S.J.Cantrell@warwick.ac.uk}

\author{Richard Sharp} 
\address{Mathematics Institute, University of Warwick,
Coventry CV4 7AL, U.K.}
\email{R.J.Sharp@warwick.ac.uk}

\begin{abstract}
\noindent We consider a counting problem in the setting of hyperbolic dynamics. Let $\phi_t : \Lambda \to \Lambda$ be a weak mixing hyperbolic flow. We count the proportion of prime periodic orbits of $\phi_t$, with length less than $T$, that satisfy an averaging condition related to a H\"older continuous function $f: \Lambda \to \mathbb{R}$. We show, assuming an approximability condition on $\phi$, that as $T \to \infty$, we obtain a central limit theorem. 
\end{abstract}

\maketitle

\section{Introduction}
Let $\phi_t : \Lambda \to \Lambda$ be a hyperbolic flow. By a celebrated result of Ratner \cite{rat},
 a central limit theorem 
holds for H\"older observables with respect to the equilibrium state of a H\"older continuous function
and, in particular, with respect to the measure of maximal entropy $\mu$. More precisely, let
 $f : \Lambda \to \mathbb R$ be a H\"older continuous function
and write
\[
\sigma_f^2 :=\lim_{T \to \infty} \int_\Lambda \left(\int_0^T f(\phi_t(x))dt - T \int f \, d\mu\right)^2 d\mu(x);
\]
Ratner showed that if $\sigma_f^2>0$ then
\[
\mu\left(\left\{x \in \Lambda \hbox{ : } \frac{\int_0^T f(\phi_t x) \, dt - T\int f \, d\mu}{\sqrt{T}} \le y\right\}\right)
\to\frac{1}{\sqrt{2\pi \sigma_f^2}} \int_{-\infty}^y e^{-u^2/2\sigma_f^2} \, du,
\]
as $T \to \infty$.
Furthermore, she showed that $\sigma_f^2>0$ if and only if $f$ is not cohomologous to a constant,
where we say 
 that two functions $f$ and $g$ are cohomologous if $f-g=u'$, with $u : \Lambda \to \mathbb R$ is continuously differentiable along flow lines and
\[
u'(x) := \lim_{t\to 0} \frac{u(\phi_t x) - u(x)}{t}.
\]

In this paper we shall be interested in a periodic orbit version of the above result. 
(We restrict to the case where the flow is weak-mixing. If the flow is not weak-mixing then,
after introducing a symbolic model for the dynamics, we may reduce to the case of a constant suspension flow 
over a subshift of finite type, in which case the 
desired periodic orbit result follows from section 6 of \cite{cp}.)
First let us introduce some terminology. Let $\mathcal P$ denote the set of prime periodic $\phi$-orbits. For $\gamma \in \mathcal P$,
we shall write $l(\gamma)$ for its least period.  We then write
\[\mathcal P(T) = \{\gamma \in \mathcal P \hbox{ : } l(\gamma) \leq T\}\]
and, for $\Delta>0$,
\[
\mathcal P(T,\Delta) = \{\gamma \in \mathcal P \hbox{ : } T < l(\gamma)\le T+\Delta\}.
\]
We also write $\pi(T) = \#\mathcal P(T)$
and
$\pi(T,\Delta) = \#\mathcal P(T,\Delta)$.

For a function $f : \Lambda \to \mathbb R$, we write
\[
l_f(\gamma) = \int_0^{l(\gamma)} f(\phi_t(x)) \, dt,
\]
where $x$ is any point on $\gamma$, and call this the $f$-weight of $\gamma$. 
We say that $f : \Lambda \to \mathbb R$ has {\it integer periods} if 
\[
\{l_f(\gamma) \hbox{ : } \gamma \in \mathcal P\} \subset \mathbb Z
\]
and that $f : \Lambda \to \mathbb R$ is 
%non-lattice
%if $f$ is cohomologous to $a+b\psi$, where $a,b\in \mathbb R$ and $\psi$ is a continuous function 
%with integer periods.
{\it flow independent} if, for $a,b \in \mathbb R$, $a +bf$ has
integer periods only if $a=b=0$.
%(See Remark \ref{context} below.)

The periodic orbits of $\phi_t$ are equidistributed with respect to the measure of maximal
entropy,
in the sense that, for any $\Delta>0$,
\[
\lim_{T \to \infty} \frac{1}{\pi(T,\Delta)} \sum_{\gamma \in \mathcal P(T,\Delta)} \frac{l_f(\gamma)}{l(\gamma)}
= \int f \, d\mu
\]
\cite{bow-equi}, \cite{parry-equi}, and one can formulate a periodic orbit version of the 
central limit theorem to quantify deviations from this equidistribution.
Such a result was first obtained by Lalley
\cite{lalley-87} but it only holds under the assumption that $f$ is 
flow independent, which is strictly stronger than $\sigma_f^2>0$.
(There is also a $C^\infty$ condition on $f$  in Lalley's work but this is easy to remove.) 
Furthermore, Lalley obtained his 
central limit theorem as a consequence of a local limit theorem, the proof of which requires considerable analytic machinery. 
(See Remark \ref{context} below.)
It is therefore interesting to obtain a short and direct proof which holds for all
H\"older continuous $f$ with $\sigma_f^2>0$. This is the purpose of the current paper.
Our proof applies whenever $\phi_t$ is a transitive Anosov flow with 
stable and unstable foliations which are not jointly integrable or, for general hyperbolic flows, whenever
$\phi_t$  satisfies a mild Diophantine condition on the periods of its periodic orbits.
These conditions allow use to apply the work of Dolgopyat \cite{dd}
to give bounds on iterates of a family of so-called transfer operators and hence extensions and bounds on the complex generating functions we need to study.

%Consider a weak mixing hyperbolic flow $\phi_t : \Lambda \to \Lambda$. Let $\mathcal P(T)$ 
%denote the set of prime periodic orbits with least period at most $T$.
%It is well known that the prime orbit counting function $\pi(T) = \#\mathcal P(T)$ satisfies the following 
%asymptotic formula:
%$$ \pi(T) \sim \frac{e^{hT}}{hT}.$$
%Here  $\sim$ denotes that the quotient of the related functions converge to $1$ as $T \to \infty$. 
%This result is known as the Prime Orbit Theorem.

%Much work has been done on calculating the error term in the Prime Orbit Theorem in different 
%settings. For example, the prime orbit counting function for the geodesic flow on a compact surface of 
%variable negative curvature exhibits an exponential error term i.e
%$$ \pi(T) = \mathrm{li}(e^{hT}) + O(e^{cT}),$$
%for some $c<h$ \cite{exp}. The proof of this result due to Pollicott and Sharp \cite{exp} relies on 
%methods of Dolgopyat to bound (the norm of) iterates the transfer operator. These bounds are used to 
%understand the domain of analyticity of an appropriate dynamical zeta-function. We use similar ideas %in this paper.  We require the following assumption so that we can apply the methods shown in %\cite{poly}, which in turn rely on the results of Dolgopyat \cite{dd}. 

Recall that a real number $\beta$ is Diophantine if there exists $c>0$ and $\alpha >1$ such that
$|q\beta - p| \ge c q^{-\alpha}$
for all integers $p,q$ with $q>0$.
We say that 
$\phi_t$ satisfies the {\it approximability condition} 
%if it has two periodic orbits $\gamma_1, \gamma_2 \in \mathcal P$ such that
%$l(\gamma_1)/l(\gamma_2)$ is Diophantine.
if it has three closed orbits $\gamma_1, \gamma_2$ and $\gamma_3$ %with distinct lengths 
such that
\[
 \frac{l(\gamma_1) - l(\gamma_2)}{l(\gamma_2)-l(\gamma_3)}
\]
is Diophantine. 

\medskip

%When this condition holds, the error term in the Prime Orbit Theorem decays polynomially, i.e
%$$ \pi(T) = \frac{e^{hT}}{hT} + O\left(\frac{e^{hT}}{T^{1+\delta}}\right)$$
%for some $\delta >0$ \cite{poly}. However, in general, weak mixing hyperbolic flows can exhibit 
%arbitrarily bad decay rates for the error term in the Prime Orbit Theorem. 

%We consider a counting problem related to periodic orbits for hyperbolic flows. Suppose $\gamma$ is 
%a closed periodic orbit of $\phi$ and that $F: \Lambda \to \mathbb{R}$ a H$\ddot{\text{o}}$lder %continuous function. Let $l(\gamma)$ denote the length of $\gamma$ and write 
%$$l_F(\gamma) = \int_\gamma F.$$  

Our main result is the following. By replacing $f$ with $f - \int f\, d\mu$, it is natural to assume that
$\int f \, d\mu=0$.

\begin{theorem} \label{main}
Suppose that $\phi_t : \Lambda \to \Lambda$ is either a transitive Anosov flow 
with stable and unstable foliations which are not jointly integrable
or a hyperbolic flow satisfying the approximability condition.
Let $f: \Lambda \to \mathbb{R}$ be a H\"older continuous function satisfying
$\int f \, d\mu=0$ that is not a coboundary. Then, for each fixed $\Delta>0$,
%
%\noindent
%(i)
%$$\frac{1}{\pi(T)} \#\left\{\gamma \in \mathcal P(T) : \frac{\int_\gamma f}{ \sqrt{l(\gamma)}} 
%\le y \right\} \to \frac{1}{\sqrt{2\pi} \sigma_f} \int_{-\infty}^y e^{-t^2/2\sigma_f^2} dt,$$
%
%\noindent
%(ii)
\[
\frac{1}{\pi(T,\Delta)} \#\left\{\gamma \in \mathcal P(T,\Delta) : \frac{l_f(\gamma)}{ \sqrt{T}} \le y \right\} \to \frac{1}{\sqrt{2\pi} \sigma_f} \int_{-\infty}^y e^{-t^2/2\sigma_f^2} dt,
\]
for each $y\in\mathbb{R}$, as $T \to \infty$. 
%Here
%$$\sigma_F^2=\lim_{T \to \infty} \int_\Lambda \left(\int_0^T F(\phi_t(x))dt - T \int Fd\mu\right)^2 d
%\mu(x)>0$$
%and $\mu$ is the measure of maximal entropy for $\phi_t$.
\end{theorem}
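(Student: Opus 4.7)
The plan is to prove the CLT by establishing pointwise convergence of characteristic functions,
\[
\Phi_T(\xi) := \frac{1}{\pi(T,\Delta)} \sum_{\gamma \in \mathcal P(T,\Delta)} e^{i\xi l_f(\gamma)/\sqrt{T}} \longrightarrow e^{-\sigma_f^2 \xi^2/2}
\]
for each fixed $\xi \in \mathbb{R}$, and then invoking L\'evy's continuity theorem. First I would use Markov sections to code $\phi_t$ as a suspension flow with H\"older roof $r$ over a subshift of finite type $(\Sigma,\sigma)$, so that each $\gamma \in \mathcal P$ corresponds (up to finitely many exceptions) to a $\sigma$-periodic orbit $\sigma^n x = x$ with $l(\gamma) = r^n(x) := \sum_{i=0}^{n-1} r(\sigma^i x)$ and $l_f(\gamma) = \tilde f^n(x)$ defined analogously, where $\tilde f(x) := \int_0^{r(x)} f(\phi_u x)\,du$.

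Next I would study the two-parameter analytic family of transfer operators $L_{-sr+z\tilde f}$ on a suitable Banach space of H\"older functions on $\Sigma$. Near $(s,z)=(h,0)$, with $h$ the topological entropy, its leading eigenvalue $\lambda(s,z)$ is simple and analytic with $\lambda(h,0)=1$; the implicit function theorem yields a branch $s(z)$ with $\lambda(s(z),z)=1$, and the Parry--Pollicott identification of the measure of maximal entropy on $\Lambda$ with its Gibbs model on $\Sigma$ gives $s(0)=h$, $s'(0)=0$ (as $\int f\,d\mu = 0$), and $s''(0)=\sigma_f^2$. Setting $\zeta = \xi/\sqrt T$ and forming
\[
L(s,\zeta) := \prod_{\gamma \in \mathcal P} \bigl(1 - e^{-sl(\gamma) + i\zeta l_f(\gamma)}\bigr)^{-1},
\]
the Dolgopyat bounds (afforded by non-joint integrability or by the approximability hypothesis) give polynomial control of $L_{-sr+i\zeta\tilde f}^N$ on vertical strips, yielding a meromorphic extension of $L(\cdot,\zeta)$ to some half-plane $\{\mathrm{Re}(s)>h-\delta\}$ with a single simple pole at $s=s(i\zeta)$ and polynomial growth on vertical lines.

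I would then express the numerator of $\Phi_T$ as a Perron-type contour integral of $-L'/L$ against a smooth cut-off of the indicator of $(T,T+\Delta]$, shift the contour through the pole, and collect the residue, obtaining the uniform-in-$\zeta$ asymptotic
\[
\sum_{\gamma \in \mathcal P(T,\Delta)} e^{i\zeta l_f(\gamma)} \sim \frac{(e^{s(i\zeta)\Delta}-1)\,e^{s(i\zeta)T}}{s(i\zeta)\,T},
\]
whose $\zeta=0$ case recovers the prime orbit theorem for $\pi(T,\Delta)$. Taking the ratio, the prefactors cancel as $\zeta \to 0$, and the Taylor expansion $s(i\xi/\sqrt T)-h = -(\sigma_f^2/2)\xi^2/T + O(T^{-3/2})$ delivers $\Phi_T(\xi) \to e^{-\sigma_f^2\xi^2/2}$.

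The main obstacle will be making the contour-shift asymptotic \emph{uniform in $\zeta = \xi/\sqrt T$}: Dolgopyat's bounds must persist, with constants independent of $\zeta$, under the small imaginary perturbation $i\zeta\tilde f$ of the potential, so that the residue at $s(i\zeta)$ carries the stated leading term while the contour-shift remainder is $o(\pi(T,\Delta))$ uniformly. Analytic perturbation theory disposes of the bounded-$|\mathrm{Im}(s)|$ range in a routine way, but the high-frequency portion of the contour is precisely where Dolgopyat's machinery is essential, and verifying that its output survives perturbation by $i\zeta\tilde f$ uniformly as $\zeta \to 0$ is the central technical step; a secondary, minor, point is the replacement of the sharp window $(T,T+\Delta]$ by smoothed indicators and the elimination of the smoothing error, which is standard once the uniform contour bounds are in hand.
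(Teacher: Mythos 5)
Your overall architecture matches the paper's: reduce to pointwise convergence of characteristic functions by L\'evy continuity, build a two-parameter dynamical $L$-function $L(s,t)$ from periodic data, use Dolgopyat's transfer operator bounds (valid under non-joint integrability or the Diophantine approximability hypothesis) to get a pole-free strip past $\mathrm{Re}(s)=h$ with polynomial control on vertical lines, and extract the leading behaviour by shifting a Perron-type contour through the simple pole at $s(t)$. You also correctly identify the heart of the matter: Proposition \ref{Lnbhd} in the paper is exactly the statement that the Dolgopyat strip and polynomial bound can be taken uniform over the perturbation parameter $t$ in a fixed interval $(-\delta,\delta)$, which is what makes the substitution $t=\xi/\sqrt T$ legitimate.

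Where you and the paper diverge is in how the Perron step is implemented and, more importantly, in how much remains to be done afterwards. The contour integral of $-L'(s,t)/L(s,t)$ naturally produces a sum over \emph{all} periodic orbits $\gamma'\in\mathcal Q$ weighted by $\Lambda(\gamma')$, not the unweighted sum over primes in the window that your displayed asymptotic asserts. The paper smooths with Riesz means $(x-e^{l(\gamma')})^k$ for exactly the reason you mention (polynomial growth of $L'/L$ forces a smoothing), obtains the cumulative asymptotic in Lemma \ref{contour}, and then goes through three genuinely separate reductions: passing from $\mathcal Q$ to $\mathcal P$ (Corollary \ref{cor-contour}), iteratively un-smoothing by differencing (Section 5, at the cost of halving the power of $\log x$ each time), and removing the weight $l(\gamma)$ by Abel/Stieltjes summation before finally differencing to the window $(T,T+\Delta]$ (Section 6). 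Your ``one-shot'' smooth cut-off on $(T,T+\Delta]$ hides all of this; in particular the factor $1/T$ in your target formula is really the $l(\gamma)\approx T$ weight being quietly divided out, a step that needs an argument (though, working in a bounded window, it is easier than the paper's cumulative version). So the proposal is not wrong in spirit, but it compresses roughly half of the paper's proof (Corollary \ref{cor-contour}, Lemma \ref{kremoved}, Lemma \ref{stieltjes}, Proposition \ref{nearlythere}) into the phrase ``standard once the uniform contour bounds are in hand,'' and should at least flag the weight-removal and the prime/non-prime passage explicitly, since they are real steps and not part of the smoothing issue. A minor further point: the paper keeps $t\in(-\delta,\delta)$ fixed throughout the analytic estimates and only substitutes $t=\xi/\sqrt T$ at the very end; your framing of the uniformity as ``as $\zeta\to 0$'' is equivalent, but the cleaner statement is uniformity over a fixed compact $t$-interval, which is what Proposition \ref{Lnbhd} actually provides.
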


\begin{remarks} \label{context}

\noindent
(i) The requirement that the flow has non-jointly integrable stable and unstable foliations and 
the approximability condition each imply that the flow is weak-mixing.

\smallskip
\noindent 
(ii)
It is interesting to note that Lalley's periodic orbit version of the local limit theorem for 
hyperbolic flows predates the measure version. In fact, Waddington states a measure local limit 
theorem
for hyperbolic flows in \cite{wadd},
based on the ideas in \cite{lalley-87} and \cite{mul}, although his proof contains some technical gaps.
(In particular, the passage to functions defined on a one-sided shift  on page 459 of
\cite{wadd} needs further justification.)
%(The function $\alpha_{N,v,a}(T)$ defined on page 471 of \cite{wadd} is not monotone and consequently 
%the subsequent application of a Tauberian theorem is not
%justified.)
For semiflows satisfying some abstract conditions (which hold, for example, for a suspension semiflow over 
the map $x \mapsto kx$ mod 1 on the circle $\mathbb R/\mathbb Z$, for an integer $k \geq 2$),
a local limit limit theorem was obtained by Iwata \cite{iwata}.
Dolgopyat and N\'andori recently gave a proof of the local limit theorem  using a different approach
\cite{dol-nan}.

\smallskip
\noindent
(iii)
The approximability condition is not robust under perturbation of the flow. However, Field,
Melbourne and T\"or\"ok \cite{FMT} have given conditions which hold for an open dense set of flows. 
More precisely, if for $r \ge 2$, $\mathcal A_r(M)$ denotes the set of $C^r$ Axiom A flows on a compact manifold
$M$ then $\mathcal A_r(M)$ contains a $C^2$-open, $C^r$-dense subset which satisfies the conditions for every non-trivial basic set.
\end{remarks}

In the next section, we define hyperbolic and Anosov flows and discuss some of their basic properties, 
including the information we will need about entropy and pressure. In section 3,
we mention how our central limit theorem will follow from the pointwise convergence of a family 
of Fourier transforms and introduce a  dynamical $L$-function whose analytic properties will be 
key for our analysis. The work of Dolgopyat \cite{dd} is crucial here.
In section 4, we carry out some calculations using contour integration to obtain an asymptotic formula
for a summatory function related to one we require but containing extra terms. In section 5, we remove these extra terms and complete the proof of Theorem  \ref{main}.

The authors are grateful to Ian Melbourne, Vesselin Petkov and Luchezar Stoyanov for helpful comments.

%Before proving this theorem, we recall some basic facts about hyperbolic flows and the symbolic 
%systems used to code them.

\section{Hyperbolic flows and their periodic orbits}

We begin with the definition of a hyperbolic flow.
Let $\phi_t:M \to M$ be a $C^1$ flow on a smooth manifold $M$ and let $\Lambda \subset M$ be a compact flow invariant subset. We say that $\phi_t : \Lambda \to \Lambda$ is a hyperbolic flow if the following conditions are satisfied.
\begin{enumerate}
\item There is a splitting of the tangent bundle $T_\Lambda M = E^0 \oplus E^s \oplus E^u$ such that
\begin{enumerate}
\item there exist $C,\lambda >0$ with $\|D\phi_t |_{E^s}\|, \|D\phi_{-t}|_{E^u}\| \le Ce^{-\lambda t},$ for $t \ge 0$,
\item $E^0$ is one-dimensional and tangent to the flow.
\end{enumerate}
\item The periodic orbits of $\Lambda$ are dense and $\Lambda$ is not a single orbit.
\item $\Lambda$ contains a dense orbit.
\item There exists an open set $U \supset \Lambda$ such that $\Lambda = \bigcup_{t=-\infty}^\infty \phi_t(U)$.
\end{enumerate}
If (1) holds with $\Lambda =M$ then we say that $\phi_t : M \to M$ is an Anosov flow.
In this case (2) is automatically satisfied (this is the Anosov closing lemma \cite{anosov}) and (4) is trivially satisfied. If, in addition, (3) holds then we say that $\phi_t$ is a transitive Anosov flow.

We say that $\phi_t$ is topologically weak-mixing if it does not admit a non-trivial eigenfrequency corresponding to a continuous function, i.e if the only $G \in C(\Lambda,\mathbb C)$ and $a \in \mathbb R$ such that $G\circ \phi_t = e^{iat} G$ for all $t \in \mathbb{R}$ are the constant functions and $a=0$.
It is known that $\phi_t$ is topologically weak-mixing if and only if
$\{l(\gamma) \hbox{ : } \gamma \in \mathcal P\}$ does not lie in a discrete subgroup of $\mathbb R$.

\begin{proposition} [\cite{poly}]  \label{pot-poly}
Under the hypotheses of Theorem \ref{main}, there exists $\eta>0$ such that
\begin{enumerate}
\item[(i)]
\[
\pi(T) = \frac{e^{hT}}{hT} \left(1+ O\left(\frac{1}{T^\eta}\right)\right),
\]
\item[(ii)]
\[
\sum_{l(\gamma) \leq T} l(\gamma) = \frac{e^{hT}}{h} \left(1+ O\left(\frac{1}{T^\eta}\right)\right).
\]
\end{enumerate}
\end{proposition}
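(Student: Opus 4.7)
The plan is to establish both asymptotics by a contour integration argument applied to the dynamical zeta function
\[
\zeta(s) := \prod_{\gamma \in \mathcal P}\bigl(1-e^{-s l(\gamma)}\bigr)^{-1},
\]
which converges absolutely in the half-plane $\mathrm{Re}(s)>h$, where $h$ denotes the topological entropy of $\phi_t$. Via a Markov section for the flow and the associated symbolic suspension model, $\log \zeta$ can be expressed through traces of the transfer operators $\mathcal L_{-s\tau}$ acting on a space of H\"older functions on a subshift of finite type, where $\tau$ is the H\"older roof function of the suspension.

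The essential analytic input is provided by Dolgopyat \cite{dd}: under either non-joint integrability of the stable and unstable foliations (in the Anosov case) or the approximability condition on periodic orbit lengths, one has bounds of the form
\[
\|\mathcal L_{-s\tau}^n\|_{\theta} \le C |\mathrm{Im}(s)|^{a} e^{-\rho n},
\]
for some $\rho, a>0$, uniformly for $s$ in a vertical strip $\mathrm{Re}(s)\in(h-\delta,h+\delta)$ with $|\mathrm{Im}(s)|$ large. These bounds, together with the standard perturbation theory near the leading eigenvalue at $s=h$, extend $\zeta$ meromorphically to some strip $\mathrm{Re}(s)>h-\delta'$ whose only singularity is a simple pole at $s=h$, and yield polynomial growth of $|\zeta'(s)/\zeta(s)|$ in vertical strips.

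Setting $\psi(T) := \sum_{n l(\gamma)\le T} l(\gamma)$ and using the identity
\[
-\frac{\zeta'(s)}{\zeta(s)} = \sum_{\gamma \in \mathcal P}\sum_{n\ge 1} l(\gamma) e^{-sn l(\gamma)},
\]
Perron's formula produces an integral representation for $\psi(T)$. Shifting the contour past the simple pole at $s=h$ picks up the residue $e^{hT}/h$ as the main term, while the remaining integral along a contour of the form $\mathrm{Re}(s) = h - c(\log T)^{-1}$ (or a de la Vall\'ee Poussin style curve) is bounded by $O(e^{hT}T^{-\eta})$ using the polynomial growth estimate. The contribution of iterates $n\ge 2$ to $\psi(T)$ is $O(e^{hT/2})$, which is absorbable into the error, and this yields the asymptotic in (ii). Partial summation then converts (ii) into (i) with a (possibly smaller) power-saving error $\eta>0$.

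The principal obstacle is the spectral bound on iterates of the transfer operator $\mathcal L_{-s\tau}$ in the vertical direction; without this Dolgopyat estimate there is no meromorphic extension of $\zeta$ past $\mathrm{Re}(s)=h$ and no effective rate can be obtained. Everything downstream -- contour shifts, discarding prime-power contributions, and Abel summation between $\psi(T)$, $\sum_{l(\gamma)\le T} l(\gamma)$ and $\pi(T)$ -- is standard once this bound is in hand.
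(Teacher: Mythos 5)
The paper does not prove Proposition \ref{pot-poly} itself; it cites it from \cite{poly}, and your sketch correctly reconstructs the method used there (dynamical zeta function, symbolic model, Dolgopyat estimates on transfer operators, Perron/contour integration, discarding prime-power terms, partial summation from (ii) to (i)). So the strategy is right and matches the source.

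One imprecision worth flagging: you say the Dolgopyat bounds ``extend $\zeta$ meromorphically to some strip $\mathrm{Re}(s)>h-\delta'$.'' That overstates what \cite{dd} (and hence \cite{poly}, and Proposition \ref{Lnbhd} of this paper) actually give. The pole-free region is not a full vertical strip but a domain that narrows as $|\mathrm{Im}(s)|\to\infty$, of the form $\mathrm{Re}(s) > h - C\min\{1,|\mathrm{Im}(s)|^{-\epsilon}\}$, together with polynomial bounds on $\zeta'/\zeta$ there. This is precisely why the error term in the proposition is only a power of $T$ rather than exponentially small in $T$; had one a full strip extension with polynomial growth, the contour could be pushed to a fixed abscissa $h-\delta'/2$ and the error would improve to $O(e^{(h-\delta'')T})$. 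Your subsequent choice of contour at $\mathrm{Re}(s)=h-c(\log T)^{-1}$, or a de la Vall\'ee Poussin curve, is compatible with the actual narrowing domain and does give the claimed $O(e^{hT}T^{-\eta})$, so the argument survives; but the description of the region should be corrected, since otherwise it is inconsistent with the strength of the conclusion. A second, entirely minor point: the prime-power contribution is really $O(T e^{hT/2})$ rather than $O(e^{hT/2})$ (there are about $T/l_0$ values of $n$ contributing), though this is still absorbed by the main error term.
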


We now recall some of the thermodynamic formalism associated to the flow $\phi_t$.
This is standard material which may be found in, for example, \cite{marsur}.
Let $\mathcal M(\phi)$ denote the set of $\phi_t$-invariant Borel probability measures on $M$.
We define the pressure of a H\"older continuous function $f: M \to \mathbb{R}$ to be
\[
P(f) = \sup_{m \in \mathcal M(\phi)}\left\{h_m(\phi) + \int f \, dm\right\},
\]
where $h_m(\phi)$ denotes the entropy of $\phi_t$ with respect to $m$.
The supremum is attained for a unique measure $m_f \in \mathcal M(\phi)$, which we call
the equilibrium state of $f$. When $F=0$, we call $m_0$ the measure of maximal entropy for 
$\phi_t$ and write $\mu=m_0$.
We have $P(0) = h$, the topological entropy of $\phi$.
For $s \in \mathbb R$, the function $s \mapsto P(sf)$ is real analytic, furthermore 
\[
\frac{dP(sf)}{ds}\Bigg|_{t=0} = \int f \, d\mu
\]
and
\[
\frac{d^2P(sf)}{ds^2}\Bigg|_{t=0} = \sigma_f^2.
\]
Recall that $\sigma_f^2>0$
unless $f$ is cohomologous to a constant.
We may also extend $P(sf)$ to an analytic function for complex values of $s$ in a sufficiently small 
neighbourhood of the real line. 
In particular, $s(t) := P(itf)$ is defined and real analytic for $|t|<\delta$, for some $\delta>0$.

%Suppose that $\int f \, d\mu =0$. Then we have the following lemma.

%For $t$ sufficiently small we have the following expression for the pressure function - 
%see page 267 of \cite{mul}.

The following lemma is a consequence the above discussion.

\begin{lemma}
If $\int f\, d\mu =0$ and $f$ is not a coboundary then, for $|t| < \delta$, 
\[
s(t) = h-\frac{\sigma_f^2t^2}{2} + O(t^3),
\]
with $\sigma_f^2>0$. 
\end{lemma}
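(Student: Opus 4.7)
The plan is simply to Taylor expand the function $s(t) = P(itf)$ about $t=0$ using the two derivative formulas for pressure recorded in the paragraph immediately preceding the lemma, and then to deduce the strict positivity $\sigma_f^2 > 0$ from the hypothesis that $f$ is not a coboundary. Since $P(zf)$ was explicitly extended to an analytic function on a complex neighbourhood of the real line, Taylor's theorem is immediately available at the origin.

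More concretely, write $g(z) := P(zf)$ for $z$ in that complex neighbourhood, so that $s(t) = g(it)$. By the chain rule
\[
s'(t) = i\, g'(it), \qquad s''(t) = -\, g''(it).
\]
Evaluating at $t=0$ and using the formulas
\[
g(0) = P(0) = h, \qquad g'(0) = \int f\, d\mu, \qquad g''(0) = \sigma_f^2,
\]
together with the hypothesis $\int f\, d\mu = 0$, I would obtain $s(0) = h$, $s'(0) = 0$, and $s''(0) = -\sigma_f^2$. Substituting into
\[
s(t) = s(0) + s'(0)\, t + \tfrac{1}{2} s''(0)\, t^2 + O(t^3)
\]
then yields the claimed expansion on the interval $|t| < \delta$ on which $s$ was asserted to be real analytic.

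It remains to verify $\sigma_f^2 > 0$. For this I would invoke the fact, recalled in the text, that $\sigma_f^2 > 0$ unless $f$ is cohomologous to a constant. Any coboundary $u'$ integrates to zero against the invariant measure $\mu$, so if $f$ were cohomologous to a constant $c$ then $c = \int f\, d\mu = 0$, which would make $f$ itself a coboundary, contradicting the hypothesis. Hence $\sigma_f^2 > 0$ and the lemma follows. There is no real obstacle here; the statement is essentially a direct consequence of Ruelle's derivative formulas for pressure combined with this coboundary observation.
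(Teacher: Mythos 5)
Your proof is correct and is exactly the argument the paper intends: the paper states the lemma without explicit proof as ``a consequence of the above discussion,'' namely the derivative formulas for $P(sf)$ and the analyticity of $s(t)=P(itf)$, which is precisely what you Taylor-expand. Your observation that ``not a coboundary'' together with $\int f\,d\mu=0$ forces ``not cohomologous to a constant'' (and hence $\sigma_f^2>0$) is the small bridging step the paper leaves implicit, and you have filled it in correctly.
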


A simple calculation then gives
\begin{equation} \label{usefullimit}
\lim_{T \to \infty} e^{(h-s(t/\sqrt{T}))T} = e^{-\sigma_f^2 t^2/2}.
\end{equation}

\section{Fourier Transforms and $L$-functions}

Let $f : \Lambda \to \mathbb R$ be a H\"older continuous function satisfying
$\int f \, d\mu=0$.
By  L\'evy's Continuity Theorem \cite{wiley}, to prove Theorem \ref{main} 
it is enough to to show that the Fourier transforms of the 
distributions
\[
\frac{1}{\pi(T,\Delta)} \#\left\{\gamma \in \mathcal P(T,\Delta) : 
\frac{l_f(\gamma) }{ \sqrt{T}} \le y \right\}
\]
converge pointwise to the Fourier transform of the normal distribution $N(0,\sigma_f^2)$. In other words, we need to show that, for all $t \in \mathbb R$,
\[
\frac{1}{\pi(T,\Delta)} \sum_{\gamma \in \mathcal P(T,\Delta)} e^{itl_f(\gamma)/\sqrt{T}} \to e^{-\sigma_f^2t^2/2},
\]
as $T \to \infty$. To do this, we will use the periodic orbit data $l(\gamma)$ and $l_f(\gamma)$
to build a family of dynamical $L$-functions
$L(s,t)$. Here $s$ is a complex variable (associated to the lengths $l(\gamma)$) and $t$ is a real variable 
(associated to the $f$-weights $l_f(\gamma)$).
%\begin{theorem}
%Let $F_1,F_2,....$ be distributions with Fourier transforms $\gamma_1(t),\gamma_2(t),...$. Suppose that for all 
%$t \in \mathbb{R}$, $\gamma_n(t) \to \gamma(t)$ pointwise as $n \to \infty$ and that $\gamma$ is the Fourier %transform of some distribution $F$. Then, $F_n \rightarrow F$ in distribution.
%\end{theorem}
%To prove Theorem $0.1$ we use the standard method; we study the pointwise convergence of the appropriate 
%sequence of Fourier transforms. Specifically, we show that for fixed $t \in \mathbb{R}$,
%\[
%\frac{1}{\pi(T)} \sum_{l(\gamma) \le T} e^{itl_F(\gamma)/\sqrt{T}} \to e^{-\sigma_F^2t^2/2},
%\]
%as $T \to \infty$. The initial term in the above convergence is the Fourier transform of the distribution
%\[
%\frac{1}{\pi(T)} \#\left\{\gamma \in \mathcal P(T) : \frac{\int_\gamma F }{ \sqrt{T}} \le y \right\}
%\]
%and the second term is the Fourier transform of the Gaussian distribution with mean $0$ and variance 
%$\sigma_F^2$. Theorem $0.1$ then follows from an application of the 
%To prove the above convergence result, we study an appropriate $L$-function.
%\begin{definition}
We define
$$L(s,t) = \prod_{\gamma \in \mathcal P} \left(1-e^{-sl(\gamma) + itl_f(\gamma)}\right)^{-1} = \exp\left\{ \sum_{\gamma \in \mathcal P} \sum_{m=1}^{\infty} \frac{1}{m}e^{-sml(\gamma) + itl_f(\gamma)} \right\}.$$
%\end{definition}
%where the sum and product over $\gamma$ are taken over prime periodic orbits. 
%Let $h$ denote the topological entropy of $\phi$. 
Then $L(s,t)$ is non-zero and analytic in the region 
$\mathrm{Re}(s)>h$ and for all $t \in \mathbb R$ \cite{pp}. 
In order to prove Theorem \ref{main}, we will need to extend $L(s,t)$
to a neighbourhood of $\mathrm{Re}(s)=h$.

In fact, it will be convenient to work with the 
logarithmic derivative (with respect to $s$) $L'(s,t)/L(s,t)$. Write
$\mathcal Q$ for a set of all (not necessarily prime) periodic orbits of $\phi_t$ 
and, for $\gamma' \in \mathcal Q$, if $\gamma' = \gamma^n$, $n \geq 1$, with $\gamma \in \mathcal P$,
write $\Lambda(\gamma') = l(\gamma)$.
Then we have
$$\frac{L'(s,t)}{L(s,t)} = - \sum_{\gamma' \in \mathcal Q} \Lambda(\gamma') e^{-sl(\gamma') + itl_f(\gamma')},$$
whenever the series converges.
%Here $\Lambda(\gamma')$ denotes the prime length of $\gamma'$, i.e if $\gamma'=\gamma^n$, $n \ge 1$,
%with $\gamma \in \mathcal P$ then $\Lambda(\gamma') = l(\gamma)$. 

%For $\delta>0$ sufficiently small, we may define a real analytic function 
%$s: (-\delta,\delta) \to \mathbb{C}$ by 
%$s(t) = P(itF)$. Then
%$s(0) = h$ and $\mathrm{Re}(s) <h$ for $t \neq 0$.

Our proof relies heavily on the following proposition.

\begin{proposition} \label{Lnbhd}
There exists $C>0$ and $\epsilon >0$ 
%and a real analytic function $s: (-\delta,\delta) \to \mathbb{C}$ 
such that, for any fixed 
$t \in (-\delta,\delta)$,
$$\frac{L'(s,t)}{L(s,t)} + \frac{1}{s-s(t)}$$
is analytic in $\mathrm{Re}(s) > h - C \min \{ 1, |\mathrm{Im}(s)|^{-\epsilon}\}.$ 
%for $C, \epsilon>0$ independent of $t \in U$. 
Furthermore, there exists $ \beta>0$, independent of $t \in (-\delta,\delta)$, 
such that for  $\mathrm{Re}(s) > h - C \min \{ 1, |\mathrm{Im}(s)|^{-\epsilon}\},$
$$\left|\frac{L'(s,t)}{L(s,t)}\right| = O\left(\max\{|\mathrm{Im}(s)|^{\beta},1\}\right).$$
\end{proposition}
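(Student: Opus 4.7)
The plan is to pass to a symbolic model for $\phi_t$ and analyse $L(s,t)$ through the spectrum of an associated family of transfer operators, invoking Dolgopyat's estimates \cite{dd} to extend everything past the critical line $\mathrm{Re}(s) = h$. Using Markov sections, we model $\phi_t$ (modulo the standard boundary issues) as a suspension over a subshift of finite type $\sigma : \Sigma \to \Sigma$ with H\"older roof function $r$; integrating $f$ along the flow yields a H\"older function $F$, and after Sinai's lemma we may take both $r$ and $F$ to depend only on future coordinates. Defining the transfer operator
\[
(\mathcal{L}_{s,t} w)(x) = \sum_{\sigma y = x} e^{-sr(y) + itF(y)} w(y)
\]
on H\"older functions, the Ruelle fixed-point formula and a routine rearrangement identify $L(s,t)$ (up to explicit analytic boundary corrections) with the dynamical determinant $\det^\#(I - \mathcal{L}_{s,t})^{-1}$, and in particular
\[
\frac{L'(s,t)}{L(s,t)} = \operatorname{tr}\bigl((I - \mathcal{L}_{s,t})^{-1} \partial_s \mathcal{L}_{s,t}\bigr) + (\text{analytic}).
\]

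For the isolated singularity at $s = s(t)$ I would use Ruelle--Perron--Frobenius: for $t \in (-\delta,\delta)$, the operator $\mathcal{L}_{s,t}$ is quasi-compact with a simple leading eigenvalue $\lambda(s,t)$ satisfying $\lambda(s(t),t) = 1$ and $\partial_s \lambda(s(t),t) = -\lambda(s(t),t)\int r\, dm_{s(t),t} \neq 0$. Analytic perturbation then gives a spectral decomposition $\mathcal{L}_{s,t} = \lambda(s,t)\Pi(s,t) + \mathcal{N}(s,t)$ with a uniform spectral gap, so that
\[
\frac{L'(s,t)}{L(s,t)} = \frac{\partial_s \lambda(s,t)}{1-\lambda(s,t)} + A(s,t)
\]
with $A$ analytic; expanding $\lambda$ to first order at $s(t)$ shows that $L'/L + 1/(s-s(t))$ is analytic in a complex neighbourhood of $s = s(t)$.

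To extend this picture to the full strip $\mathrm{Re}(s) > h - C\min\{1, |\mathrm{Im}(s)|^{-\epsilon}\}$ and obtain the polynomial bound, I would apply Dolgopyat's estimates: under either the non-joint integrability hypothesis for transitive Anosov flows or the approximability condition (which encodes a Diophantine non-lattice condition on the periods), \cite{dd} supplies constants $\epsilon, \beta_1 > 0$ and $\rho \in (0,1)$ such that
\[
\|\mathcal{L}_{s,t}^n\|_{\text{H\"older}} \le C\,|\mathrm{Im}(s)|^{\beta_1} \rho^n
\]
on that strip, uniformly in $t \in (-\delta,\delta)$ since $itF$ is a small imaginary perturbation. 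Summing the Neumann series gives $(I - \mathcal{L}_{s,t})^{-1}$ with norm polynomial in $|\mathrm{Im}(s)|$, which yields the claimed analytic continuation of $L'/L + 1/(s-s(t))$ and the bound $|L'(s,t)/L(s,t)| = O(\max\{|\mathrm{Im}(s)|^\beta, 1\})$, noting that $|1/(s-s(t))|$ is small when $|\mathrm{Im}(s)|$ is large.

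The principal obstacle is verifying the uniformity of Dolgopyat's estimates as $t$ varies in $(-\delta,\delta)$: his non-integrability/Diophantine conditions are properties of the roof function $r$ alone, and $itF$ is a smooth and small imaginary twist, so the argument in \cite{dd} should go through perturbatively, but this is the substantive technical point. The other routine care needed is the passage between the symbolic zeta function, which counts periodic points of $\sigma$, and $L(s,t)$, which is an Euler product over primitive orbits of $\phi_t$, including the standard treatment of the Markov boundary.
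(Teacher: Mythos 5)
Your proposal follows essentially the same route as the paper: the paper's proof is a short citation-based sketch invoking \cite{pp} for the simple pole at $s=s(t)$, \cite{poly} for the $t=0$ case, and \cite{dd} (together with \cite{PetStoy}, \cite{Stoy}) for the Dolgopyat-style extension and polynomial bound on $L'/L$, which is precisely the transfer-operator machinery you spell out. You also correctly identify the one substantive technical point — uniformity of the Dolgopyat estimates over $t\in(-\delta,\delta)$ — which is exactly what the paper delegates to the two-parameter analysis of Petkov--Stoyanov.
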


%The proof of this proposition relies on the symbolic coding (via Lemma \ref{coding} and equation 
%(\ref{bowenmanning})) of our hyperbolic flow. 
%In particular, we will need to pass to a one-sided subshift of finite type $\sigma : \Sigma^+ \to \Sigma^+$,
%where
%\[
%\Sigma^+ = \{(x_n)_{n=0}^{\infty} : x_n \in \{1,2,...,k\}, A_{x_n,x_{n+1}}=1, n \in \mathbb{Z^+}\}.
%\]
%In view of Lemma ??,
%the functions $r: \Sigma \to \mathbb R$ and  $f : \Sigma \to \mathbb R$ defined by 
%\[
% f(x)= \int_0^{r(x)} F\circ \pi(x,u) \ du
%\]
%may be interpreted as H\"older continuous functions $r: \Sigma^+ \to \mathbb R$ 
%and $f : \Sigma^+ \to \mathbb R$.
%We may then consider the symbolic $L$-function
%\[
%L_\sigma(s,t) = \exp \sum_{n=1}^\infty \frac{1}{n} \sum_{\sigma^n(x)=x} e^{-sr^n(x) + itf^n(x)}.
%\]
%It is well known that $L_\sigma(s,t)$ is non-zero and analytic for $\mathrm{Re}(s)>h$ (for any value of 
%$t \in \mathbb R$) and, due to equation \ref{bowenmanning},
%$L(s,t)/L_\sigma(s,t)$ is non-zero and analytic for $\mathrm{Re}(s) > h-\epsilon$, for some $\epsilon>0$
%(independent of $t$).
%Thus it will be sufficient to prove Proposition \ref{Lnbhd} for the function $L_\sigma(s,t)$.

\begin{proof}
%Our aim is to understand the analyticity of $L$ as discussed above. To do so we analyse the symbolic version 
%of the $L$-function
%\[
%\tilde{L}(s,t) = \exp\left\{\sum_{n=1}^\infty \frac{1}{n} \sum_{\sigma^n(x)=x} e^{-sr^n(x) + itf^n(x)} \right\}.
%\]
%Here we have abused notation and have replaced $r$ and $f$ by H$\ddot{\text{o}}$lder continuous functions 
%from the one-sided shift space
%\[
%\Sigma^+ = \{(x_n)_{n=0}^{\infty} : x_n \in \{1,2,...,k\}, A_{x_n,x_{n+1}}=1, n \in \mathbb{Z}_{\ge 0}\}
%\]
%to $\mathbb{R}$. These new functions agree with the old ones on the set of periodic points and 
%thus do not change the $L$-function. There is a standard method of comparing the properties of $\tilde{L}$ 
%to those of $L$. In fact, to prove this proposition it suffices to show the analogous statement for $\tilde{L}$. 
%We admit these details to simplify the following exposition - See \cite{man}, \cite{poly}, Lemma $3.3$ and %Lemma $3.6$ of \cite{han}.
It is a standard part of the theory of dynamical zeta functions that $L(s,t)$ 
has a simple pole 
at $s=s(t)$ and, apart from this pole, is analytic and non-zero for $s$ close to $s(t)$ \cite{pp}.
Hence, $L'(s,t)/L(s,t)$ has a simple pole at $s=s(t)$ with residue $-1$.

The extension to a larger domain and the bound rely on the work of Dolgopyat on bounds for iterates of transfer
operators \cite{dd}.
In the case where $t=0$, the extension and bound were established \cite{poly}.
For $t$ non-zero but small, one may modify the approach in \cite{dd} to get the required results.
(See for example \cite{PetStoy} where similar calculations are carried out.
The recent paper \cite{Stoy}, as well as containing important new results, gives a detailed account of the history of this problem.)
%
%Using the method shown in \cite{poly} which relies on the work on Dolgopyat \cite{dd}, we deduce that there 
%exists a neighbourhood of the origin $U,$ an analytic function $s: U \to \mathbb{R}$ and $C,\epsilon >0$ 
%independent of $t \in U$ such that $\tilde{L}(s,t)$ is analytic and non-zero for fixed $t \in U$ in
%$$W = \left\{s: \mathrm{Re}(s)>h-c\min\{|\mathrm{Im}(s)|^{-\epsilon},1\} \text{ and } s \neq s(t)\right\}.$$
%This method requires that $\phi$ has the approximability condition. This domain of analyticity passes to the 
%logarithmic derivative. It is standard (see \cite{pp} or \cite{han}) to check that for fixed $t \in U$, 
%$\frac{d}{ds}(
%\log\tilde{L})(s,t)$ has a simple pole of residue $-1$ at $s=s(t)$. Hence we deduce the first part of the  
%proposition.
%
%For the second part of the proposition we use the method presented in Lemma $3$ and Lemma $4$ of 
%\cite{poly}. By Lemma $3$ in \cite{poly}, thinning the region $W$ and reducing $U$ if necessary, we have that, %for $s \in W$,
%$$\log\left(\tilde{L}(s,t)\right) = O(\min\{|\mathrm{Im}(s)|^p\log|\mathrm{Im}(s)|,1\}),$$
%for some $p$ independent of $t \in U$. 
%
%This can be converted into a bound for the logarithmic derivative using Lemma $4$ in \cite{poly}.
\end{proof}

%The function $s: U \to \mathbb{R}$ has an interpretation in terms of thermodynamic formalism; 
%$s(t)$ denotes the pressure of the function $itF.$ Hence, $s(0) = h$. The following lemma provides 
%insight into the behavior of $s$ for small real $t$. Let $\text{P}$ denote the analytic extension of the 
%pressure function that agrees with $s$ on $U$.

%\begin{lemma}
%For real non-zero $t \in U$, $\mathrm{Re}(s(t)) = \mathrm{Re}(\text{P}(itF)) < h$ and 
%hence for fixed non-zero real $t \in U$,
%$$x^{s(t)} = O(x^h).$$
%\end{lemma}

%\begin{proof}
%This follows easily from the inequality 
%$$\mathrm{Re}(\text{P}(f)) \le \text{P}(\mathrm{Re}(f))$$ 
%(which holds for any H$\ddot{\text{o}}$lder function $f: \Sigma \to \mathbb{C}$) and the convexity of the 
%pressure function.
%\end{proof}

\section{Contour Integration}

%From this point onwards, we assume that $t$ is real valued. To avoid overcomplicating the following 
%exposition, we will abuse notation and write $U$ to denote $U \cap \mathbb{R}$.

The rest of the proof follows similar lines to the method used in section $2$ of \cite{hol}. We need the following 
standard identity (see \cite{ingham}, page 31), which holds for any $k\ge 1$,
\begin{equation} \label{perron}
\frac{1}{2\pi i} \int_{d-i\infty}^{d+i\infty} \frac{x^{s}}{s(s+1)\dotsm(s+k)} ds = \left\{
     \begin{array}{@{}l@{\thinspace}l}
       0 &\hspace{4pt} 0< x < 1\\
       \frac{1}{k!} (1-1/x)^k &\hspace{4pt} x \ge 1. \\
      
     \end{array}
   \right.
 \end{equation}
 %\vspace{2mm}
 
Applying (\ref{perron}) term-by-term to $-L'(s,t)/L(s,t)$ gives
\[
\sum_{e^{l(\gamma')} \le x} 
\Lambda(\gamma') e^{itl_f(\gamma')} (x - e^{l(\gamma')})^k 
= \frac{k!}{2\pi i} \int_{d-i\infty}^{d+i\infty} 
\left(-\frac{L'(s,t)}{L(s,t)}\right)\frac{x^{s+k}}{s(s+1)\dotsm (s+k)} ds,
\]
where $\gamma'$ runs over the elements of $\mathcal Q$.

\begin{lemma} \label{contour}
For any fixed $t \in (-\delta,\delta)$, there is $k \ge 1$ and $\alpha>0$ such that
\[
\sum_{e^{l(\gamma')} \le x} \Lambda(\gamma') e^{itl_f(\gamma')} (x - e^{l(\gamma')})^k = \frac{k!}{s(t)(s(t)+1) \dotsm (s(t)+k)} x^{s(t)+k} + O\left(\frac{x^{h+k}}{(\log x)^\alpha}\right).
\]
The implied constant in the above error term is independent of $t$.
\end{lemma}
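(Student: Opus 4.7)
The plan is to start from the Perron-type identity displayed just before the lemma, which expresses the left-hand side as a vertical contour integral of $-L'(s,t)/L(s,t)$ times $x^{s+k}/(s(s+1)\cdots(s+k))$ taken at some abscissa $d>h$. The strategy is the standard one in analytic number theory: shift the line of integration to the left until we cross the pole at $s=s(t)$, pick up the residue (which yields the main term), and estimate the integral over the new contour using the analytic continuation supplied by Proposition \ref{Lnbhd}.

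First I would choose $d=h+1$, say, and cut the vertical line off at height $|\mathrm{Im}(s)|=T^*$, where $T^*=T^*(x)$ will be specified at the end. Then I would replace the truncated vertical line by a contour $\Gamma$ consisting of the curve $\mathrm{Re}(s)=h-C\min\{1,|\mathrm{Im}(s)|^{-\epsilon}\}$ between heights $\pm T^*$, together with the two horizontal segments that close up the box at heights $\pm T^*$. For $t\in(-\delta,\delta)$ fixed, shrinking $\delta$ if necessary, the only singularity of the integrand in the enclosed region is the simple pole at $s=s(t)$ of $-L'(s,t)/L(s,t)$, whose residue is $+1$; together with $x^{s(t)+k}/(s(t)\cdots(s(t)+k))$ this produces the main term claimed in the lemma. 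What remains is to bound, by $O(x^{h+k}/(\log x)^\alpha)$ uniformly in $t$, the integrals along $\Gamma$ and along the two tails $|\mathrm{Im}(s)|>T^*$ of the original vertical line (estimated by pushing back using again Proposition \ref{Lnbhd} plus the decay of the kernel).

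Both estimates use the bound $|L'(s,t)/L(s,t)|=O(|\mathrm{Im}(s)|^\beta)$ from Proposition \ref{Lnbhd} together with $|s(s+1)\cdots(s+k)|^{-1}\asymp |\mathrm{Im}(s)|^{-k-1}$ for large $|\mathrm{Im}(s)|$, so we fix $k$ to be any integer with $k>\beta$. On the tails $|\mathrm{Im}(s)|>T^*$ of the original line, the integrand is dominated by $x^{d+k}|\mathrm{Im}(s)|^{\beta-k-1}$, yielding a tail estimate $O(x^{d+k}(T^*)^{\beta-k})$; moving back to $\Gamma$ along the top and bottom segments of the box costs only polynomial factors and a gain of $x^{-C(T^*)^{-\epsilon}}$ in the bound on $x^{\mathrm{Re}(s)}$. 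On the curved part of $\Gamma$, $x^{\mathrm{Re}(s)+k}\le x^{h+k}\exp\!\bigl(-C(\log x)(T^*)^{-\epsilon}\bigr)$ in the worst place, integrated over a range of length $O(T^*)$.

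Adding everything up gives a bound of the form
\[
x^{h+k}\Bigl((T^*)^{\beta-k}+T^*\exp\!\bigl(-C(T^*)^{-\epsilon}\log x\bigr)\Bigr).
\]
Choosing $T^*=(\log x/\log\log x)^{1/\epsilon}$ makes the exponential $O((\log x)^{-C})$ and the first term $O((\log x)^{-(k-\beta)/\epsilon})$, so enlarging $k$ if needed we obtain the claimed error $O(x^{h+k}/(\log x)^\alpha)$. The uniformity in $t\in(-\delta,\delta)$ is immediate because the constants $C,\epsilon,\beta$ in Proposition \ref{Lnbhd} are independent of $t$ and, after possibly shrinking $\delta$, so is the horizontal distance from $s(t)$ to the line $\mathrm{Re}(s)=h$. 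The main technical obstacle is the contour shift itself, namely verifying that one can push the vertical line onto $\Gamma$ despite the polynomial growth of $L'/L$; this is handled by truncating at height $T^*$ and by choosing $k$ large enough that the integrand is absolutely integrable along $\Gamma$.
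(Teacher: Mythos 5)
Your contour shift, residue computation, and use of Proposition~\ref{Lnbhd} with $k>\beta$ match the paper's strategy, and your curved contour (hugging the boundary of the region of analyticity) is a reasonable alternative to the paper's piecewise-linear contour with a rectangular detour near the real axis. However, your choice of a \emph{fixed} abscissa $d=h+1$ introduces a genuine gap: on the tails at $\mathrm{Re}(s)=d$, and likewise on the horizontal segments at heights $\pm T^*$, the factor $x^{\mathrm{Re}(s)+k}$ is bounded only by $x^{d+k}=x\cdot x^{h+k}$, a full power of $x$ worse than the target $x^{h+k}/(\log x)^\alpha$. No choice of $T^*$ repairs this: polylogarithmic $T^*$ leaves the extra factor of $x$ intact in $x^{d+k}(T^*)^{\beta-k}$, while taking $T^*$ a power of $x$ blows up the other error term $T^*\exp(-C(T^*)^{-\epsilon}\log x)$. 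Your final displayed bound silently replaces $x^{d+k}$ by $x^{h+k}$, concealing the discrepancy. The paper fixes this by taking $d=h+(\log x)^{-1}$, so that $x^d=e\,x^h$, and then uses the trivial bound $|L'(s,t)/L(s,t)|=O(1/(d-h))=O(\log x)$ on the tails, with the extra logarithm absorbed by choosing $k$ large enough.

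Two smaller points: the shifted contour should sit strictly inside the open region $\mathrm{Re}(s)>h-C\min\{1,|\mathrm{Im}(s)|^{-\epsilon}\}$, since Proposition~\ref{Lnbhd} asserts nothing on the boundary curve itself (the paper uses the vertical line at $c=h-CR^{-\epsilon}/2$, halfway to the boundary); and you should record explicitly that $s(t)$ remains to the right of your contour uniformly for $|t|<\delta$. This does hold after shrinking $\delta$, since $s(t)=h+O(\delta^2)$ while your contour is at $\mathrm{Re}(s)=h-C$ near the real axis, but it is not automatic --- it is precisely the reason the paper inserts its detour to $\mathrm{Re}(s)=r_1$, because the paper's line at $\mathrm{Re}(s)=c$ tends to $h$ as $x\to\infty$ and would eventually lie to the right of $s(t)$.
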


\begin{proof}
We first note that for $s=\varsigma +i\tau$, with $\varsigma >h$, we have the trivial estimate
\begin{equation} \label{trivial}
\left|\frac{L'(s,t)}{L(s,t)}\right| \leq \left|\frac{L'(\varsigma,0)}{L(\varsigma,0)}\right|
=O\left(\frac{1}{\varsigma-h}\right).
\end{equation}

We will prove the result using contour integration.
Choose numbers $r_1,r_2>0$ such that, for $|t|<\delta$, 
$s(t)$ lies in the interior of the rectangle with vertices at $r_1 + ir_2$, $h+ir_2$, $h-ir_2$ and $r_1 - ir_2$. where 
$r_1, r_2 >0$. Furthermore, decreasing $\delta$ if necessary, we may 
assume that this rectangle lies within the region of analyticity described in Proposition \ref{Lnbhd}. 
Set $c= h - CR^{-\epsilon}/2$, $d=h+(\log x)^{-1}$ and $R=(\log x)^\kappa$ where $0<\kappa<1/\epsilon$. Take $k > \beta$, where $\beta$ is the same as in Proposition \ref{Lnbhd}. By the Residue Theorem we may write
\begin{align*}
\sum_{e^{l(\gamma')} \le x} \Lambda(\gamma') e^{itl_f(\gamma')} (x - e^{l(\gamma')})^k &= \frac{k!}{s(t)(s(t)+1) \dotsm (s(t)+k)} x^{s(t)+k} +\\
&\hspace{6mm} \frac{k!}{2 \pi i} \int_\Gamma \left(\frac{L'(s,t)}{L(s,t)}\right) \frac{x^{s+k}}{s(s+1) \dotsm (s+k)} ds,
\end{align*}
where $\Gamma$ is the contour consisting of the straight lines connecting the points $d+i\infty,$ $d+iR$, $c+iR$, $c+ir_2$, $r_1+ir_2$, $r_1-ir_2$, $c-ir_2$, $c-iR$, $d-iR$ and $d-i\infty$.

%\centerline{\includegraphics[width= 4cm, height=7cm]{Picture}}
%\begin{center}
%Figure $1$\\
%\end{center}
Using (\ref{trivial}), we have
\[
\left|\int_{d \pm iR}^{d \pm i\infty} \frac{L'(s,t)}{L(s,t)} \frac{x^{s+k}}{s(s+1)\dotsm (s+k)} \, ds \right| = O\left(x^{d+k} \int_R^\infty \frac{1}{u^{k+1}} \, du \right) = O\left(\frac{x^{h+k}}{R^{k}}\right).
\]
Using the bound from Proposition \ref{Lnbhd}, we have
\begin{align*}
%&\left|\int_{d \pm iR}^{d \pm i\infty} \frac{L'(s,t)}{L(s,t)} \frac{x^{s+k}}{s(s+1)\dotsm (s+k)} \, ds \right| = 
%O\left(x^{d+k} \int_R^\infty \frac{1}{u^{k+1}} \, du \right) = O\left(\frac{x^{h+k}}{R^{k}}\right),\\
&\left| \int_{c \pm iR}^{d \pm i R} \frac{L'(s,t)}{L(s,t)} \frac{x^{s+k}}{s(s+1) \dotsm (s+k)} \, ds \right| = O\left(R^{\beta - k- 1} x^{c+k}\right),\\
&\left| \int_{c \pm ir_2}^{c\pm iR} \frac{L'(s,t)}{L(s,t)} \frac{x^{s+k}}{s(s+1)\dotsm(s+k)} \, ds \right| = O\left(x^{c+k}\right),\\
&\left| \int_{c \pm ir_2}^{r_1\pm ir_2} \frac{L'(s,t)}{L(s,t)} \frac{x^{s+k}}{s(s+1)\dotsm(s+k)} \, ds \right| = O\left(x^{c+k}\right),\\
&\left| \int_{r_1 + ir_2}^{r_1 -  ir_2} \frac{L'(s,t)}{L(s,t)} \frac{x^{s+k}}{s(s+1)\dotsm(s+k)} \, ds \right| = O\left(x^{r_1+k}\right).
\end{align*}
From our choice of $c$, $d$, $\kappa$, $R$ and $k$, we see that
$O(x^{h+k}/R^k)$ and $O(R^{\beta-k-1} x^{c+k})$ are $O(x^{h+k}/(\log x)^\alpha)$, for some 
$\alpha>0$, while
$O(x^{c+k})$ is  $O(x^{h+k}a(x))$, where $a(x)$ tends to zero faster than $(\log x)^{-\eta}$, for any 
$\eta>0$. The final term has a power saving compared to $x^{h+k}$.
Thus the result follows.
%We deduce that the claim follows for $0<\alpha < \min\{ (k-\beta)/\epsilon, 2\delta'\}$.
\end{proof}

We claim that the previous lemma holds if we alter the sum so that it is taken over prime orbits.
 We define
\[
S_k(x) =\sum_{e^{l(\gamma)} \le x} \Lambda(\gamma) e^{itl_f(\gamma)} (x - e^{l(\gamma)})^k,
\]
where the summation is over $\gamma \in \mathcal P$.

\begin{corollary} \label{cor-contour}
For any fixed $t \in (-\delta,\delta)$, there is $k \ge 1$ and $\alpha>0$ such that
\[
%\sum_{e^{l(\gamma)} \le x} \Lambda(\gamma) e^{itl_F(\gamma)} \left(x - e^{l(\gamma)}\right)^k 
S_k(x)
= \frac{k!}{s(t)(s(t)+1) \dotsm (s(t)+k)} x^{s(t)+k} + O\left(\frac{x^{h+k}}{(\log x)^\alpha}\right).
\]
The implied constant in the above error term is independent of $t$.
\end{corollary}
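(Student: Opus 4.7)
The plan is to deduce Corollary \ref{cor-contour} from Lemma \ref{contour} by showing that the contribution of non-prime periodic orbits to the sum in Lemma \ref{contour} is negligible compared to the error term $O(x^{h+k}/(\log x)^\alpha)$. Since the sum in Lemma \ref{contour} ranges over $\gamma' \in \mathcal Q$ and $S_k(x)$ ranges only over prime orbits $\gamma \in \mathcal P$, it suffices to bound the difference
\[
D(x) := \sum_{n=2}^{\infty} \sum_{\substack{\gamma \in \mathcal P \\ n\, l(\gamma) \le \log x}} l(\gamma)\, e^{itnl_f(\gamma)}\, \bigl(x - e^{nl(\gamma)}\bigr)^k,
\]
which is precisely what one obtains by restricting the sum over $\mathcal Q$ to orbits $\gamma' = \gamma^n$ with $n\ge 2$ and recalling that $\Lambda(\gamma') = l(\gamma)$, $l(\gamma') = nl(\gamma)$, and $l_f(\gamma') = n l_f(\gamma)$.

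To estimate $D(x)$, I would first apply the trivial bounds $|e^{itnl_f(\gamma)}|=1$ and $|x-e^{nl(\gamma)}|^k \le x^k$ (valid whenever $e^{nl(\gamma)}\le x$), yielding
\[
|D(x)| \le x^k \sum_{n=2}^{\lfloor(\log x)/l_{\min}\rfloor} \; \sum_{\substack{\gamma \in \mathcal P \\ l(\gamma) \le (\log x)/n}} l(\gamma),
\]
where $l_{\min}>0$ is the length of the shortest prime orbit. Applying part (ii) of Proposition \ref{pot-poly}, the inner sum is $O(e^{h(\log x)/n}) = O(x^{h/n})$, so
\[
|D(x)| \le x^k \sum_{n=2}^{\lfloor(\log x)/l_{\min}\rfloor} O\bigl(x^{h/n}\bigr) = O\bigl(x^{h/2+k} \log x\bigr),
\]
since the dominant contribution comes from $n=2$ and the remaining $O(\log x)$ terms are each bounded by $x^{h/2}$.

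Because the power saving $x^{h/2}$ beats any polylogarithmic loss, the bound $O(x^{h/2+k}\log x)$ is absorbed into the error term $O(x^{h+k}/(\log x)^\alpha)$ already present in Lemma \ref{contour}, and the implied constants are independent of $t \in (-\delta,\delta)$ (they depend only on the geometry of the flow and the minimum period). Combining this estimate with Lemma \ref{contour} immediately yields the claimed asymptotic for $S_k(x)$. There is no real obstacle here: the argument is a standard prime-versus-prime-power comparison, and the only mild point to verify is that the uniformity in $t$ of the implied constant in Lemma \ref{contour} is preserved, which it is, since the estimate on $D(x)$ is obtained by taking absolute values and does not use $t$ at all.
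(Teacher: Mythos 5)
Your proof is correct and follows essentially the same route as the paper: decompose the sum from Lemma \ref{contour} into the prime-orbit part $S_k(x)$ plus the prime-power part, observe that the sum over $n$ has only $O(\log x)$ nonzero terms, and bound the prime-power contribution trivially by $O(x^{h/2+k}\cdot\mathrm{polylog}(x))$, which is absorbed into the existing error term because $x^{h/2}$ beats any power of $\log x$. The only (cosmetic) difference is that you invoke Proposition \ref{pot-poly}(ii) to bound the inner sum by $O(x^{h/n})$, obtaining $O(x^{h/2+k}\log x)$, whereas the paper uses a slightly cruder bound and gets $O(x^{h/2+k}(\log x)^2)$; both suffice.
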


\begin{proof}
Let $l_0$ be the shortest length of any periodic orbit for $\phi$. We write
\begin{align*}
\sum_{e^{l(\gamma')} \le x} \Lambda(\gamma') e^{itl_F(\gamma')} (x-e^{l(\gamma')})^k = %\sum_{e^{l(\gamma)} \le x} l(\gamma) &e^{itl_F(\gamma)} \left(x - e^{l(\gamma)}\right)^k\\
S_k(x)
+ \sum_{n=2}^\infty \sum_{e^{nl(\gamma)}\le x} l(\gamma) e^{itnl_f(\gamma)}(x-e^{nl(\gamma)})^k.
\end{align*}
Note that in the last sum over $n$ in the above expression, 
the terms are non-zero only for $n \le (\log x)/l_0$. Hence,
$$\sum_{n=2}^\infty \sum_{e^{nl(\gamma)}\le x} l(\gamma) e^{itnl_f(\gamma)}(x-e^{nl(\gamma)})^k = O(\log x \cdot x^{h/2} \cdot \log x \cdot x^k) = O( x^{k+h/2} (\log x)^2 ).$$
This implies the claim.
\end{proof}

\section{Auxiliary Calculations}
We now wish to remove the terms $(x - e^{l(\gamma)})^k$ from $S_k(x)$. 
We will first show that the estimate for $S_k(x)$ in Corollary \ref{cor-contour}
implies a similar estimate for $S_{k-1}(x)$, though with the exponent of $\log x$ in 
the error term reduced.

Decreasing $\alpha$ if necessary, we suppose that $0<\alpha < 2\eta$,
where $\eta$ is as in Proposition \ref{pot-poly}.
Set $\epsilon =(\log x)^{-\alpha/2}$.
We will estimate the difference
\[
D(x,\epsilon):= 
S_k(x(1+\epsilon)) -S_k(x)
%\sum_{e^{l(\gamma)} \le x(1+\epsilon)} l(\gamma) e^{itl_F(\gamma)} (x(1+\epsilon) - e^{l(\gamma)})^k 
%- \sum_{e^{l(\gamma)} \le x} l(\gamma) e^{itl_F(\gamma)} (x - e^{l(\gamma)})^k
 \]
 in two ways.
Applying Corollary \ref{cor-contour}, we have
 \begin{align*}
D(x,\epsilon)
&=
\frac{k!}{s(t)(s(t)+1) \dotsm (s(t)+k)} x^{s(t)+k} \epsilon + O\left(x^{s(t)+k}\epsilon^2\right) + O\left(\frac{x^{h+k}}
{(\log x)^\alpha}\right) \\
&= 
\frac{k!}{s(t)(s(t)+1) \dotsm (s(t)+k)} x^{s(t)+k} \epsilon  + O\left(\frac{x^{h+k}}
{(\log x)^\alpha}\right).
\numberthis \label{leadingterm}
 \end{align*}
 On the other hand,
we have
 \begin{align*} %\label{binomial}
 D(x,\epsilon) &=
 \sum_{x \le e^{l(\gamma)} \le x(1+\epsilon)} l(\gamma) e^{itl_f(\gamma)} ( x - e^{l(\gamma)})^k + kx\epsilon \sum_{e^{l(\gamma)} \le x} l(\gamma) e^{itl_f(\gamma)} (x- e^{l(\gamma)})^{k-1}\\
&+ \sum_{j=2}^k(x\epsilon)^j { k \choose j} \sum_{e^{l(\gamma)} \le x} l(\gamma) e^{itl_f(\gamma)} (x - e^{l(\gamma)})^{k-j}. \numberthis \label{binomial}
\end{align*}
Rewriting part (ii) of Proposition \ref{pot-poly}, we have
\[
\sum_{e^{l(\gamma)} \le x} l(\gamma) \sim \frac{x^h}{h}
\]
and, since $\alpha/2 < \eta$,
\[
\sum_{x \le e^{l(\gamma)} \le x(1+\epsilon)} l(\gamma) \sim \epsilon x^h.
\]
Hence, the first term on the Right Hand Side of (\ref{binomial})
is $O(x^{h+k}\epsilon^k)$ and the $j$th term in the final summation is
$O(x^{h+k} \epsilon^j)$. Dividing by $kx\epsilon$ and comparing with (\ref{leadingterm})
gives
\begin{align*}
%\sum_{e^{l(\gamma)} \le x} l(\gamma) e^{itl_F(\gamma)} (x-e^{l(\gamma)})^{k-1} 
S_{k-1}(x)
&=\frac{(k-1)!}{s(t)(s(t)+1) \dotsm (s(t)+k-1)} x^{s(t)+k-1}+O\left(\frac{x^{h+k-1}}{\epsilon (\log x)^\alpha}, \epsilon x^{h+k-1}\right) \\
&=\frac{(k-1)!}{s(t)(s(t)+1) \dotsm (s(t)+k-1)} x^{s(t)+k-1}
+O\left(\frac{x^{h+k-1}}{(\log x)^{\alpha/2}}\right).
\end{align*}
Proceeding inductively, we obtain the following (where the new value of $\alpha$ is the original
$\alpha$ divided by $2^k$.)

\begin{lemma} \label{kremoved}
For some $\alpha>0$, we have 
$$ \sum_{e^{l(\gamma)} \le x} l(\gamma)e^{itl_F(\gamma)} = \frac{x^{s(t)}}{s(t)} + 
O\left(\frac{x^{h}}{(\log x)^{\alpha}}\right).$$
\end{lemma}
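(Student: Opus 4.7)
The plan is to establish the lemma by a simple downward induction on the exponent $j$ in
\[
S_j(x) = \sum_{e^{l(\gamma)} \le x} l(\gamma)\, e^{itl_f(\gamma)} (x - e^{l(\gamma)})^j,
\]
iterating exactly the reduction from $S_k(x)$ to $S_{k-1}(x)$ carried out in the paragraphs preceding the lemma statement. The base case $j = k$ is Corollary~\ref{cor-contour}. For the inductive step, I would assume that for some $1 \le j \le k$ and some $\alpha_j > 0$ (with implied constants uniform in $t \in (-\delta,\delta)$) one has
\[
S_j(x) = \frac{j!}{s(t)(s(t)+1) \cdots (s(t)+j)}\, x^{s(t)+j} + O\!\left(\frac{x^{h+j}}{(\log x)^{\alpha_j}}\right).
\]

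To pass from $S_j$ to $S_{j-1}$, I would set $\epsilon = (\log x)^{-\alpha_j/2}$ (shrinking $\alpha_j$ beforehand so that $\alpha_j/2 < \eta$, ensuring the short-interval estimate from Proposition~\ref{pot-poly}(ii) applies) and evaluate $D_j(x,\epsilon) := S_j(x(1+\epsilon)) - S_j(x)$ in two ways, as in (\ref{leadingterm}) and (\ref{binomial}). Substituting the inductive hypothesis gives the first evaluation with leading term a multiple of $x^{s(t)+j}\epsilon$. The binomial expansion of $(x(1+\epsilon) - e^{l(\gamma)})^j - (x - e^{l(\gamma)})^j$ together with the short-interval estimate $\sum_{x \le e^{l(\gamma)} \le x(1+\epsilon)} l(\gamma) \sim \epsilon x^h$ gives the second, in which the dominant contribution is $jx\epsilon\, S_{j-1}(x)$ and every remaining term is $O(x^{h+j}\epsilon^r)$ for some $r \ge 2$. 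Equating the two evaluations, dividing by $jx\epsilon$, and balancing the resulting errors $O(x^{h+j-1}/(\epsilon(\log x)^{\alpha_j}))$ and $O(\epsilon\, x^{h+j-1})$ at the chosen value of $\epsilon$ produces the desired asymptotic for $S_{j-1}(x)$ with new exponent $\alpha_{j-1} = \alpha_j/2$.

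After $k$ applications we arrive at $j = 0$, which is exactly the sum appearing in the lemma, with final exponent $\alpha = \alpha_k/2^k > 0$, proving the claim. I expect the main obstacle to be purely bookkeeping rather than substantive: one must verify that the implied constants remain bounded uniformly in $t$ at every stage of the induction and that the condition $\alpha_j/2 < \eta$ is preserved throughout. The former holds because at each step the new constant is a fixed polynomial expression in the previous one and the ($t$-uniform) constants from Corollary~\ref{cor-contour} and Proposition~\ref{pot-poly}; the latter is automatic since the sequence $\alpha_j$ is strictly decreasing. Since $k$ was fixed once and for all in Corollary~\ref{cor-contour} (by $k > \beta$), the total number of reductions, and hence the final loss in the error exponent, is harmless.
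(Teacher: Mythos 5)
Your proposal is correct and is essentially the paper's own argument: the paper carries out the reduction from $S_k$ to $S_{k-1}$ explicitly via the two evaluations of $D(x,\epsilon)$ and then simply states ``proceeding inductively,'' with the exponent halving at each step (so the final $\alpha$ is the initial one divided by $2^k$), which is precisely the downward induction you flesh out. Your bookkeeping remarks about $t$-uniformity of the implied constants and preservation of the condition $\alpha_j/2<\eta$ are consistent with what the paper tacitly uses.
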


We note that the constant associated to the above error term is independent of $t \in (-\delta,\delta)$.

\section{Proof of Theorem \ref{main}}

We will now complete the proof of Theorem \ref{main}.
A simple calculation using Lemma \ref{kremoved} and the limit (\ref{usefullimit}) gives the following.

\begin{lemma}
For any $t \in \mathbb{R}$,
$$ \sum_{l(\gamma) \le T} l(\gamma)e^{it l_f(\gamma)/\sqrt{T}} \sim \frac{e^{s(t/\sqrt{T})T}}{s(t/\sqrt{T})}.$$
\end{lemma}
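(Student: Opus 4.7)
The plan is to apply Lemma \ref{kremoved} directly with the substitution $x = e^T$ and $t$ replaced by $t/\sqrt{T}$, then compare the resulting main term with the uniform error estimate.

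First, I would observe that the condition $e^{l(\gamma)} \le x$ is equivalent to $l(\gamma) \le \log x$, so setting $x = e^T$ in Lemma \ref{kremoved} turns the sum into one over $l(\gamma) \le T$. This gives
\[
\sum_{l(\gamma) \le T} l(\gamma) e^{it' l_f(\gamma)} = \frac{e^{T s(t')}}{s(t')} + O\!\left(\frac{e^{hT}}{T^{\alpha}}\right)
\]
for every fixed $t' \in (-\delta,\delta)$, where the implied constant is uniform in $t'$ in this range by the remark following Lemma \ref{kremoved}.

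Next, for a fixed $t \in \mathbb{R}$, once $T$ is sufficiently large we have $t/\sqrt{T} \in (-\delta,\delta)$, so we may take $t' = t/\sqrt{T}$. The crucial point is the \emph{uniformity} of the error constant in $t'$; this is what lets us make $t'$ depend on $T$ without picking up an uncontrolled constant. We therefore obtain
\[
\sum_{l(\gamma) \le T} l(\gamma) e^{it l_f(\gamma)/\sqrt{T}} = \frac{e^{T s(t/\sqrt{T})}}{s(t/\sqrt{T})} + O\!\left(\frac{e^{hT}}{T^{\alpha}}\right).
\]

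Finally, I would show that the main term dominates the error. Using (\ref{usefullimit}),
\[
e^{T s(t/\sqrt{T})} = e^{hT} e^{-(h - s(t/\sqrt{T}))T} \sim e^{hT} e^{-\sigma_f^2 t^2/2},
\]
and since $s(t/\sqrt{T}) \to h > 0$ as $T \to \infty$, the leading term is of order $e^{hT}$, while the error is $e^{hT}/T^{\alpha}$. Dividing through by $e^{Ts(t/\sqrt{T})}/s(t/\sqrt{T})$ gives an error of order $T^{-\alpha} \to 0$, which establishes the claimed asymptotic equivalence.

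There is no real obstacle here; the only subtlety is ensuring that the error term from Lemma \ref{kremoved} is genuinely uniform in $t$, so that the substitution $t \mapsto t/\sqrt{T}$ does not smuggle in a $T$-dependent constant. Once that uniformity is in hand, the rest is a straightforward comparison of the main and error terms.
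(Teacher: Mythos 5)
Your proposal is correct and follows essentially the same route the paper intends: the paper simply states that the lemma is ``a simple calculation using Lemma \ref{kremoved} and the limit (\ref{usefullimit}),'' which is precisely the substitution $x=e^T$, $t\mapsto t/\sqrt{T}$ together with the uniformity remark and the comparison of the main term with the $O(e^{hT}/T^\alpha)$ error. You have correctly identified the one genuine subtlety, namely that the implied constant in Lemma \ref{kremoved} must be uniform in $t\in(-\delta,\delta)$ so that letting $t'$ depend on $T$ is legitimate.
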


%Setting $F=0$ we deduce
%\begin{corollary}
%$$ \sum_{l(\gamma) \le T}l(\gamma) \sim \frac{e^{hT}}{h}.$$
%\end{corollary}

This lemma, together with part (ii) of Proposition \ref{pot-poly} and (\ref{usefullimit})
again, implies that
\[
\lim_{T \to \infty} \frac{ \sum_{l(\gamma)\le T} l(\gamma) 
e^{itl_f(\gamma)/\sqrt{T}}}{\sum_{l(\gamma) \le T} l(\gamma)} 
\to e^{-\sigma_f^2t^2/2},
\]
provided we now assume that $\sigma_f^2>0$, i.e. that $f$ is not a coboundary.

We now need to remove the terms $l(\gamma)$.
 From Proposition \ref{pot-poly}, we have that
\begin{equation} \label{removel}
\sum_{l(\gamma) \le T} l(\gamma) \sim T \pi(T),
\end{equation}
as $T \to \infty$.
We also have the following.

\begin{lemma} \label{stieltjes}
For any $t \in \mathbb R$,
$$\sum_{l(\gamma) \le T} e^{itl_f(\gamma)/\sqrt{l(\gamma)}} \sim \frac{1}{T} \sum_{l(\gamma)\le T} l(\gamma) e^{itl_f(\gamma)/\sqrt{T}}.$$
\end{lemma}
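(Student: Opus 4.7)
Denote the two sides of Lemma \ref{stieltjes} by $\Sigma_1(T)$ (left) and $\Sigma_2(T)/T$ (right), and introduce
\[
B(T) := \sum_{l(\gamma)\le T} e^{itl_f(\gamma)/\sqrt T},
\]
in which the normalising factor $\sqrt T$ no longer depends on $\gamma$. The plan is to show (a) $B(T) \sim \Sigma_2(T)/T$ by Abel summation, and (b) $\Sigma_1(T) = B(T) + o(\pi(T))$ by a pointwise comparison of the summands. Together with the fact (which follows from the preceding lemma combined with (\ref{removel})) that $\Sigma_2(T)/T \sim \pi(T)e^{-\sigma_f^2 t^2/2}$, these steps yield the claim, since the $o(\pi(T))$ error in (b) is then also $o(\Sigma_2(T)/T)$.

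For (a), fix $t\in\mathbb R$ and take $T$ large enough that $t/\sqrt T \in (-\delta,\delta)$. Set $A(u) := \sum_{l(\gamma)\le u} l(\gamma) e^{itl_f(\gamma)/\sqrt T}$, so $A(T) = \Sigma_2(T)$. Lemma \ref{kremoved} applied with parameter $\tau = t/\sqrt T$ (and $x = e^u$) gives
\[
A(u) = \frac{e^{s(t/\sqrt T)u}}{s(t/\sqrt T)} + O\!\left(\frac{e^{hu}}{u^\alpha}\right),
\]
with implied constant independent of $t/\sqrt T \in (-\delta,\delta)$. Stieltjes integration by parts, with $l_0 > 0$ the minimum length of a prime orbit, yields
\[
B(T) = \int_{l_0^-}^T \frac{dA(u)}{u} = \frac{A(T)}{T} + \int_{l_0}^T \frac{A(v)}{v^2}\,dv.
\]
Iterated integration by parts shows $\int_{l_0}^T v^{-2}e^{sv}\,dv = O(e^{sT}/(sT^2))$ when $sT\to\infty$, and an analogous bound controls the error contribution. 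Hence the integral on the right is $O(A(T)/T^2) = o(A(T)/T)$, giving $B(T) \sim A(T)/T = \Sigma_2(T)/T$.

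For (b), split the sum at $l(\gamma) = T - T^{1/4}$. Orbits with $l(\gamma)\le T-T^{1/4}$ contribute at most $2\pi(T-T^{1/4})$ in modulus to $|\Sigma_1(T) - B(T)|$, and by Proposition \ref{pot-poly}(i) this is $O(e^{-hT^{1/4}}\pi(T)) = o(\pi(T))$. For the remaining orbits, the trivial bounds $|l_f(\gamma)|\le \|f\|_\infty l(\gamma) = O(T)$ and
\[
\left| \frac{1}{\sqrt{l(\gamma)}} - \frac{1}{\sqrt T}\right|
= \frac{T-l(\gamma)}{\sqrt{Tl(\gamma)}\,(\sqrt T + \sqrt{l(\gamma)})}
= O\!\left(\frac{T^{1/4}}{T^{3/2}}\right)
\]
combine to give $|e^{itl_f(\gamma)/\sqrt{l(\gamma)}} - e^{itl_f(\gamma)/\sqrt T}| = O(|t|T^{-1/4})$, so their total contribution to $|\Sigma_1(T)-B(T)|$ is $O(T^{-1/4}\pi(T)) = o(\pi(T))$, completing (b).

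The main obstacle is the limited control on $|l_f(\gamma)|$: any CLT-type bound would be circular, so the argument must rely on the trivial estimate $|l_f(\gamma)|=O(l(\gamma))$. This forces the cutoff exponent in (b) to lie strictly below $1/2$, but any $\beta \in (0,1/2)$ in place of $1/4$ works equally well, making the argument robust.
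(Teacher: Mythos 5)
Your argument is correct, and its backbone (Abel summation to strip the weight $l(\gamma)$, then an $O(e^{hT}/T^2)$ bound on the integrated remainder) is the same as the paper's. The structural difference is that you decouple the two issues that are entangled in the paper's proof: in the paper, the Stieltjes integral $\int_1^T u^{-1}\,d\varphi(u)$ is written with $\varphi(u) = \sum_{l(\gamma)\le u} l(\gamma)e^{itl_f(\gamma)/\sqrt{u}}$, so the passage from $\sqrt{l(\gamma)}$ to $\sqrt{T}$ inside the exponential is absorbed silently into the integration (its absolutely continuous part, which is of size $O(e^{hT}/T^{3/2})$, is folded into an ``$O(1)$'' that is rather too generous a label, though still $o(\pi(T))$). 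You instead freeze $T$, work with the genuine step function $A(u)=\sum_{l(\gamma)\le u} l(\gamma)e^{itl_f(\gamma)/\sqrt{T}}$, and handle the replacement $\sqrt{l(\gamma)}\leftrightarrow\sqrt{T}$ in a separate step (b) via the cutoff at $T-T^{1/4}$ together with the trivial bound $|l_f(\gamma)|=O(l(\gamma))$. This makes explicit a point the paper's proof glides over, and your observation that any cutoff exponent in $(0,1/2)$ works is correct. Two small remarks: in step (a) you do not actually need the precision of Lemma \ref{kremoved}; the crude bound $|A(u)|\le\sum_{l(\gamma)\le u}l(\gamma)=O(e^{hu})$, as the paper uses, already gives $\int_{l_0}^T u^{-2}A(u)\,du = O(e^{hT}/T^2)$. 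And in step (b) the final comparison does require knowing $\Sigma_2(T)/T\asymp\pi(T)$, which you correctly note comes from the preceding lemma and \eqref{removel}.
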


\begin{proof}
Let $\varphi(T) = \sum_{l(\gamma) \le T} l(\gamma) e^{itl_f(\gamma)/\sqrt{T}}$. Using the Stiltjes integral, we have that

\begin{align*}
\sum_{l(\gamma) \le T} e^{itl_f(\gamma)/\sqrt{l(\gamma)}} &= \int_1^T \frac{1}{u} \, d \varphi(u) + O(1)\\
&=\left[ \frac{\varphi(u)}{u}\right]_1^T + \int_1^T  \frac{\varphi(u)}{u^2} \, du + O(1)\\
&=\frac{\varphi(T)}{T} + O(1) + O\left(\int_1^T \frac{e^{hu}}{u^2} \, du \right).
\end{align*}
Integration by parts yields the estimate
$$\int_1^T \frac{e^{hu}}{u^2} \ du = \left[\frac{e^{hu}}{hu^2}\right]_1^T + 2 \int_1^T\frac{e^{hu}}{u^3} \ du = O\left(\frac{e^{hT}}{T^2} \right)$$
and the result follows.
\end{proof}

Combining Lemma \ref{stieltjes} and (\ref{removel}) gives us the following.

\begin{proposition} \label{nearlythere}
$$\lim_{T \to \infty}\frac{1}{\#\pi(T)} \sum_{l(\gamma) \le T} e^{itl_f(\gamma)/\sqrt{l(\gamma)}} = \lim_{T \to \infty} \frac{ \sum_{l(\gamma)\le T} l(\gamma) e^{itl_f(\gamma)/\sqrt{T}}}{\sum_{l(\gamma) \le T} l(\gamma)} = e^{-\sigma_f^2t^2/2}.$$ 
\end{proposition}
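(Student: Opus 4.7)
The plan is to combine Lemma \ref{stieltjes} with the asymptotic (\ref{removel}), together with work already carried out earlier in this section. For the right-hand equality the calculation is effectively complete: immediately above the statement of Lemma \ref{stieltjes}, the paper observes, using the unlabeled lemma that gave $\sum_{l(\gamma)\le T} l(\gamma)\,e^{itl_f(\gamma)/\sqrt T} \sim e^{s(t/\sqrt T)T}/s(t/\sqrt T)$, together with Proposition \ref{pot-poly}(ii) (which gives $\sum_{l(\gamma)\le T} l(\gamma) \sim e^{hT}/h$) and the limit (\ref{usefullimit}), that the middle ratio in the proposition converges to $e^{-\sigma_f^2 t^2/2}$. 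So the only task is to identify the left-most quantity with the middle one.

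For this I would divide both sides of Lemma \ref{stieltjes} by $\pi(T)$. The left-hand side becomes precisely the expression appearing on the left of the proposition. On the right-hand side a factor $1/(T\pi(T))$ multiplies $\sum_{l(\gamma)\le T} l(\gamma)\,e^{itl_f(\gamma)/\sqrt T}$; by (\ref{removel}) the denominator $T\pi(T)$ is asymptotic to $\sum_{l(\gamma)\le T} l(\gamma)$, so after the division the right-hand side is asymptotic to the middle quantity in the proposition. Chaining these two asymptotics yields both equalities.

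There is no substantial obstacle at this stage: the hard analytic work was carried out in Lemma \ref{contour} and Corollary \ref{cor-contour} via contour integration using Proposition \ref{Lnbhd}, and then in the inductive de-weighting that produced Lemma \ref{kremoved}. The only minor point worth verifying is that the remainder terms arising in the proof of Lemma \ref{stieltjes} — an $O(1)$ term and an $O(e^{hT}/T^2)$ contribution from the integral estimate — are negligible after division by $\pi(T)\sim e^{hT}/(hT)$, which is immediate from Proposition \ref{pot-poly}(i).
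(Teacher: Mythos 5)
Your proposal is correct and matches the paper's own proof: the paper establishes the right-hand equality in the paragraph preceding Lemma \ref{stieltjes}, then simply combines Lemma \ref{stieltjes} with (\ref{removel}) to equate the left quantity with the middle one, exactly as you do. The remark about the error terms is harmless but redundant, since Lemma \ref{stieltjes} already packages those into the stated asymptotic equivalence.
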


We now complete the proof of Theorem \ref{main}.

%Not needed:
%\[
%\pi(T,T+1) \sim (e^h-1)\pi(T)
%\]

\begin{proof}[Proof of Theorem \ref{main}]
First note that it follows from the prevision calculations (and 
$\pi(T,\Delta) = \pi(T+\Delta)-\pi(T)$) that
\[
\lim_{T\to \infty} \frac{1}{\pi(T,\Delta)} \sum_{T<l(\gamma)\le T+\Delta} e^{itl_f(\gamma)/\sqrt{l(\gamma)}}
=  e^{-\sigma_f^2 t^2/2}.
\]
Then note that, for a fixed $t$,
\[
\sum_{T<l(\gamma)\le T+\Delta} \left|e^{itl_f(\gamma)/\sqrt{l(\gamma)}}
-
 e^{itl_f(\gamma)/\sqrt{T}}\right|
=O\left(\frac{\pi(T,\Delta)}{\sqrt{T}}\right).
\]
This gives use the required convergence,
\[
\lim_{T\to \infty} \frac{1}{\pi(T,\Delta)} \sum_{T<l(\gamma)\le T+\Delta} e^{itl_f(\gamma)/\sqrt{T}}
=  e^{-\sigma_f^2 t^2/2}.
\]

\end{proof}

\begin{remark}
In view of Proposition \ref{nearlythere}, Theorem \ref{main} may be reformulated as
%Suppose that $\phi_t : \Lambda \to \Lambda$ is either a weak-mixing transitive Anosov flow 
%or a weak mixing hyperbolic flow satisfying the approximability condition.
%Let $f: \Lambda \to \mathbb{R}$ be a H\"older continuous function satisfying
%$\int f \, d\mu=0$ that is not a coboundary. Then
$$\frac{1}{\pi(T)} \#\left\{\gamma \in \mathcal P(T) : \frac{l_f(\gamma)}{ \sqrt{l(\gamma)}} \le y \right\} \to \frac{1}{\sqrt{2\pi} \sigma_f} \int_{-\infty}^y e^{-t^2/2\sigma_f^2} dt,$$
for each $y\in\mathbb{R}$, as $T \to \infty$. 
\end{remark}

\end{document}